\documentclass[a4paper,12pt]{amsart}
\usepackage{amssymb,amscd,amsmath,latexsym}
\usepackage[dvips]{graphicx}


\theoremstyle{plain}

\newtheorem{theo}{Theorem}[section]
\newtheorem{prop}[theo]{Proposition}

\theoremstyle{definition}

\newtheorem{prob}{Problem}

\theoremstyle{remark}
\newtheorem*{rema}{Remark}

\newtheorem{exam}[theo]{Example}

\numberwithin{equation}{section}

\newcommand{\field}[1]{\mathbb{#1}}
\newcommand{\C}{\field{C}}
\newcommand{\R}{\field{R}}
\newcommand{\Z}{\field{Z}}

\renewcommand{\P}{\mathcal P}

\DeclareMathOperator{\Hom}{Hom}
\DeclareMathOperator{\rank}{rank}

\DeclareMathOperator{\Aut}{Aut}
\DeclareMathOperator{\cf}{cf}

\def\S1{\C^*}

\def\uC{\underline{\C}}




\begin{document}

\title[Calssification problems of toric manifolds]
{Classification problems of toric manifolds via topology}
\date{\today}

\author{Mikiya Masuda}
\address{Department of Mathematics, Osaka City University, Sugimoto, 
Sumiyoshi-ku, Osaka 558-8585, Japan}
\email{masuda@sci.osaka-cu.ac.jp}
\author{Dong Youp Suh} 
\address{Department of Mathematics, Korea Advanced
Institute of Science and Technology, Gu-sung Dong, Yu-sung Gu,
Daejeon 305-701, Korea} 
\email{dysuh@math.kaist.ac.kr}
\thanks{The first author was partially supported by Grant-in-Aid for 
Scientific Research 4102-17540092
and the second author was partially supported by Korea Science and 
Engineering Foundation Grant  R11-2007-035-00000-0.}

\subjclass{Primary 57S15, 14M25; Secondary 57S25}
\date{\today}

\dedicatory{This paper is dedicated to Professor Akio Hattori on his 
77th birthday.}

\keywords{Toric manifolds, fans, cohomology, Pontrjagin class, 
quasitoric manifolds, torus manifolds}

\begin{abstract}
We propose some problems on the classification of 
toric manifolds from the viewpoint of topology and survey related results. 
\end{abstract}

\maketitle

\section{Toric manifold and fan} \label{1}


A toric variety $X$ of dimension $n$ is a normal complex algebraic 
variety with an action of an $n$-dimensional algebraic torus $(\C^*)^n$ 
having a dense orbit.  
Let $X'$ be another toric variety of complex dimension $n'$ with an action 
of an $n'$-dimensional algebraic torus $(\C^*)^{n'}$. A map from $X$ 
to $X'$ is a morphism $f\colon X\to X'$ together with 
a homomorphism $\rho\colon (\C^*)^n\to (\C^*)^{n'}$ such that 
$f(tx)=\rho(t)f(x)$ for any $t\in (\C^*)^n$ and $x\in X$. 
Among toric varieties, 
compact smooth toric varieties, which we call {\em toric manifolds}, 
are well studied.  
In this article, we propose some problems on the classification of 
toric manifolds from the viewpoint of topology and survey related results. 

A \emph{rational convex polyhedral cone} in $\R^n$ is a cone 
spanned by a finitely many vectors in $\Z^n$, and it is called 
\emph{strong} if the origin is the apex. 
A fan in $\R^n$ is a non-empty collection $\Delta$ of rational 
strongly convex polyhedral cones in $\R^n$ satisfying the following conditions:
\begin{enumerate}
\item Each face of a cone in $\Delta$ is a also a cone in $\Delta$.
\item The intersection of two cones in $\Delta$ is a face of each.
\end{enumerate}
A fan $\Delta$ is called \emph{complete} if the union of 
cones in $\Delta$ covers the entire space $\R^n$, and \emph{non-singular} 
if every cone of dimension $k$ in $\Delta$ is spanned by $k$ integral 
vectors which form a part of a basis of $\Z^n$. 
Let $\Delta'$ be another fan in $\R^{n'}$.  A map from $\Delta$ to $\Delta'$ 
is a linear map from $\R^n$ to $\R^{n'}$ which maps $\Z^n$ into 
$\Z^{n'}$ and a cone in $\Delta$ into a cone in $\Delta'$.

A fundamental result in the theory of toric varieties says that 
the category of toric varieties is equivalent to the category of fans 
(see \cite[Theorem 1.5 and Theorem 1.13]{oda88}, also 
\cite[Theorem 4.1]{oda78}). 
A toric variety $X$ is compact if and only if 
the fan $\Delta_X$ of $X$ is complete, and 
smooth if and only if $\Delta_X$ is non-singular. 
Therefore, a toric variety $X$ is a toric manifold if and only if 
$\Delta_X$ is complete and non-singular. 

The complex projective space $\C P^n$ with a linear action of $(\C^*)^n$ 
is a toric manifold.  A product of finitely many toric manifolds 
is again a toric manifold with the product action, so a product of 
finitely many complex projective spaces is a toric manifold.  Here is 
a bit more non-trivial example of toric manifolds. 

\begin{exam} \label{bott}
Let $B$ be a toric manifold of complex dimension $k$.  Let $\gamma_i\to B$ 
$(i=1,\dots,\ell)$ be $(\C^*)^k$-equivariant line bundles over $B$. 
Each $\gamma_i$ has a $\C^*$-action defined by scalar multiplication 
so that the sum $\oplus_{i=1}^\ell\gamma_i$ has an action of 
$(\C^*)^{k+\ell}$.  
Let $\uC$ be a trivial line bundle over $B$ with fiber $\C$ on which 
the $(\C^*)^k$-action is trivial.  
Then the projectivization of $\uC\oplus_{i=1}^\ell \gamma_i$ 
has an induced action of $(\C^*)^{k+\ell}$ and is again a toric manifold. 

Starting with $B$ as a point and repeating the above construction, 
say $n$ times, we obtain a sequence of toric manifolds:
\begin{equation} \label{nbott}
B_n\stackrel{p_n}\longrightarrow B_{n-1} \stackrel{p_{n-1}}\longrightarrow
\dots \stackrel{p_2}\longrightarrow B_1 \stackrel{p_1}\longrightarrow 
B_0=\{\text{a point}\}
\end{equation}
where the fiber of $p_j\colon B_j\to B_{j-1}$ for $j=1,\dots,n$ is a 
complex projective space.  We call the above sequence 
(or often the top manifold $B_n$) an $n$-stage \emph{generalized Bott tower}, 
and especially call it an $n$-stage \emph{Bott tower} when each fiber is 
$\C P^1$.  Note that a Hirzebruch surface is a 2-stage Bott tower. 
The name of Bott tower was introduced and its study was initiated 
by Grossberg-Karshon 
\cite{gr-ka94}.  See \cite{civa05}, \cite{ci-ra05} and \cite{ma-pa06} for 
further study on Bott towers. 
\end{exam}

\section{Equivariant cohomology of a toric manifold} \label{2}

Equivariant cohomology fits very well to the study of toric manifolds, 
which we shall explain in this section.  We refer the reader to 
\cite{ha-ma03}, \cite{masu99} and \cite{mukh05} for details.

We set $T=(\C^*)^n$ and let $X$ be a toric manifold of complex dimension $n$ 
with an action of $T$. 
Associated with the universal principal 
$T$-bundle $ET\to BT$, we obtain a fibration 
\begin{equation} \label{fibra}
X\stackrel{\iota}\longrightarrow ET\times_T X\stackrel{\pi}
\longrightarrow BT
\end{equation}
where $ET\times_T X$ is the orbit space of $ET\times X$ by the diagonal 
$T$-action.
The equivariant cohomology of a toric manifold $X$ is 
the ordinary cohomology of the total space of the above fibration, that is, 
\[
H^*_T(X):=H^*(ET\times_T X)
\]

Let $X_i$ $(i=1,\dots,m)$ be invariant divisors of $X$. 
Since $X_i$ and $X$ are complex manifolds, they have canonical orientations. 
Let $\tau_i$ 
be the image of the unit element in $H^0_T(X_i)$ by the equivariant Gysin 
homomorphism from $H^0_T(X_i)$ to $H^2_T(X)$ induced by the inclusion map 
from $X_i$ to $X$.  We may think of $\tau_i$ as the Poincar\'e dual of 
the cycle $X_i$ in equivariant cohomology. 
The invariant divisors $X_i$ intersect transversally. Therefore, 
for each subset $I$ of $\{1,\dots,m\}$ the Poincar\'e dual of an intersection 
$\cap_{i\in I}X_i$ is a cup product $\prod_{i\in I}\tau_i$, so that 
the product $\prod_{i\in I}\tau_i$ vanishes if the intersection 
$\cap_{i\in I}X_i$ is empty.  It turns out that 
\begin{equation} \label{ring}
H^*_T(X)=\Z[\tau_1,\dots,\tau_m]/(\prod_{i\in I}\tau_i\mid \cap_{i\in I}X_i
=\emptyset).
\end{equation}
Since the underlying simplicial complex $\Sigma_X$ of the fan $\Delta_X$ of 
$X$ is given by 
\begin{equation} \label{Sigma}
\Sigma_X=\{ I\subset \{1,\dots,m\}\mid \cap_{i\in I}X_i\not=\emptyset\},
\end{equation}
the fact (\ref{ring}) shows that 
$H^*_T(X)$ is the face ring (or Stanley-Reisner ring) of 
the simplicial complex $\Sigma_X$, in particular determined 
by $\Sigma_X$.  Conversely, $H^*_T(X)$ as a graded 
ring determines the underlying 
simplicial 
complex, that is, if $H^*_T(X)\cong H^*_T(Y)$ as graded rings, 
then the underlying 
simplicial complexes $\Sigma_X$ and $\Sigma_Y$ are isomorphic 
(see \cite{br-gu96} or \cite[Problem 31 in p.141]{stan96}). 

We note that $H^*_T(X)$ is not only a graded ring but also 
a graded algebra over $H^*(BT)$ through $\pi^*\colon H^*(BT)\to H^*_T(X)$ 
where $\pi$ is the projection in (\ref{fibra}).  
Since $H^*(BT)$ is a polynomial ring 
generated by $H^2(BT)$ and $H^2_T(X)$ is additively 
generated by $\tau_i$'s, the algebra structure can be detected 
once we know how $\pi^*(u)$ is described as a linear 
combination of $\tau_i$'s for $u\in H^2(BT)$. 
The coefficient of $\tau_i$ in the linear expression of $\pi^*(u)$ 
is a linear function of $u$, so that there is a unique element 
$v_i\in H_2(BT)$ for each $i$ such that 
\begin{equation} \label{algeb}
\pi^*(u)=\sum_{i=1}^m\langle u,v_i\rangle \tau_i
\end{equation}
where $\langle\ ,\ \rangle$ denotes the natural pairing between cohomology 
and homology.  Therefore, the simplicial complex $\Sigma_X$ together with 
the elements $v_i$'s determines the algebra structure of $H^*_T(X)$ over 
$H^*(BT)$.  

Since $T=(\C^*)^n$, there are natural identifications
\[
\Z^n= H_2(BT)=\Hom(\C^*,T)
\]
where the last one denotes the group of homomorphism from $\C^*$ to $T$. 
We denote by 
$\lambda_v$ the element in $\Hom(\C^*,T)$ corresponding to $v\in H_2(BT)$. 
It turns out that $\lambda_{v_i}(\C^*)$ is the $\C^*$-subgroup of $T$ 
which fixes the invariant divisor $X_i$ pointwise. 
For each member $I$ of $\Sigma_X$ we form a cone in $H_2(BT)\otimes_\Z\R
=\R^n$ spanned by vectors $v_i$'s for $i\in I$.  The collection of 
those cones (and the cone consisting of only the origin) agrees with the 
fan $\Delta_X$ of $X$. Therefore, the data of $\Sigma_X$ 
together with the set of vectors $\{v_i\}$ is equivalent to the data of the 
fan $\Delta_X$. 

The restriction map $\iota^*\colon H^*_T(X)\to H^*(X)$ is surjective 
and its kernel is generated by $\pi^*(H^2(BT))$. 
We set $\mu_i=\iota^*(\tau_i)\in H^2(X)$. It is the (ordinary) Poincar\'e 
dual of the cycle $X_i$ in $X$.  
Then we obtain the following well-known result 
from (\ref{ring}), (\ref{Sigma}) and (\ref{algeb}):
\begin{equation} \label{tcoho}
H^*(X)=\Z[\mu_1,\dots,\mu_m]/\mathcal I
\end{equation}
where $\mathcal I$ is the ideal generated by the following two types 
of elements:
\begin{enumerate}
\item[(1)] $\prod_{i\in I}\mu_i$ \quad for $I\notin\Sigma_X$,
\item[(2)] $\sum_{i=1}^m\langle u,v_i\rangle\mu_i$ \quad for $u\in H^2(BT)$,
\end{enumerate}
see \cite[p.106]{fult93} and \cite[p.134]{oda88}.

\section{Classification of toric manifolds as varieties} \label{3}

Before we discuss topological classification of toric manifolds, we shall 
recall some known results on the classification of toric manifolds as 
varieties. The following is fundamental. 

\begin{prop}
For toric manifolds $X$ and $Y$ of complex dimension $n$, 
the following are equivalent. 
\begin{enumerate}
\item[(V1)] $X$ and $Y$ are \emph{non-equivariantly} isomorphic as varieties.
\item[(V2)] $X$ and $Y$ are \emph{weakly equivariantly} 
isomorphic as varieties, i.e., 
there is an isomorphism $f\colon X\to Y$ together with 
an automorphism $\rho$ of $T=(\C^*)^n$ such that $f(tx)=\rho(t)f(x)$ 
for any $t\in T$ and $x\in X$. 
\item[(V3)] The fans $\Delta_X$ and $\Delta_{Y}$ of $X$ and $Y$ are 
\emph{isomorphic (or unimodularly equivalent)}, i.e., there is a unimodular 
automorphism of $\R^n$ which maps cones in 
$\Delta_X$ to cones in $\Delta_{Y}$ bijectively.
\end{enumerate}
\end{prop}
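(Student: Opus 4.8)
The plan is to prove the cycle of implications $(V3)\Rightarrow(V2)\Rightarrow(V1)\Rightarrow(V3)$, since the equivalence of the three statements is naturally organized this way. The implication $(V2)\Rightarrow(V1)$ is immediate: a weakly equivariant isomorphism is in particular an isomorphism of varieties, so one simply forgets the torus data.

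The implication $(V3)\Rightarrow(V2)$ is where I would invoke the fundamental equivalence between the category of toric varieties and the category of fans stated earlier in the excerpt (citing \cite[Theorem 1.5 and Theorem 1.13]{oda88}). A unimodular automorphism $\psi$ of $\R^n$ mapping $\Delta_X$ onto $\Delta_Y$ bijectively is, by definition, an isomorphism of the lattice $\Z^n=\Hom(\C^*,T)$ carrying cones to cones; under the functorial dictionary it therefore induces an automorphism $\rho$ of $T=(\C^*)^n$ together with an equivariant isomorphism $f\colon X\to Y$ satisfying $f(tx)=\rho(t)f(x)$. First I would check that $\psi$ genuinely qualifies as a map of fans in the sense defined in Section~\ref{1} (it is linear, maps $\Z^n$ into $\Z^n$, and sends cones to cones), and that its inverse does too, so that it is an isomorphism in the category of fans; then the categorical equivalence delivers the weakly equivariant isomorphism directly.

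The substantive implication is $(V1)\Rightarrow(V3)$, and this is where I expect the main obstacle to lie, because here we are handed only a bare isomorphism of varieties with no control over the torus actions. The natural strategy is to recover the torus and its action intrinsically from the variety structure, so that any abstract isomorphism is forced to respect them up to an automorphism of $T$. Concretely, I would argue that the algebraic torus $T$ acting on a toric manifold $X$ can be identified with a maximal torus of the automorphism group $\Aut(X)$ of $X$ as an algebraic variety; since any two maximal tori of an algebraic group are conjugate, a variety isomorphism $f\colon X\to Y$ conjugates the $T$-action on $X$ into a $T$-action on $Y$ that differs from the given one only by an inner automorphism, yielding a weakly equivariant isomorphism, hence $(V2)$. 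Combined with the already-established equivalence $(V2)\Leftrightarrow(V3)$, this closes the loop. The delicate point is justifying the intrinsic characterization of $T$ inside $\Aut(X)$ and the conjugacy of maximal tori in this possibly infinite-dimensional automorphism group; I would therefore lean on the known description of $\Aut(X)$ for complete toric manifolds (due to Demazure, and reproved in \cite{oda88}, \cite{oda78}), which exhibits $\Aut(X)$ as a linear algebraic group containing $T$ as a maximal torus, reducing the matter to the classical conjugacy theorem.
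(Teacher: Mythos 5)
Your proposal is correct and takes essentially the same route as the paper: the only nontrivial step, passing from a bare variety isomorphism to a weakly equivariant one, is handled exactly as in the paper's proof by realizing the acting torus as a maximal algebraic torus of $\Aut(X)$ and invoking the conjugacy of maximal tori, while the link between (V2) and (V3) rests on the categorical equivalence between toric varieties and fans. Your organization as a cycle $(V3)\Rightarrow(V2)\Rightarrow(V1)\Rightarrow(V3)$ rather than the paper's $(V2)\Leftrightarrow(V3)$ plus $(V1)\Rightarrow(V2)$ is a cosmetic difference only.
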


\begin{proof}
The implication from (V2) to (V1) is trivial, and 
the equivalence of (V2) and (V3) follows from the fundamental result 
in the theory of toric varieties mentioned in Section~\ref{1}. 
So, it suffices to prove the implication from (V1) to (V2).  
Suppose that there is an isomorphism $f\colon X\to Y$. 
Then it induces a group isomorphism $f_*\colon \Aut(X)\to \Aut(Y)$ 
between the automorphism groups of $X$ and $Y$.  In fact, $f_*(g)$ for 
$g\in\Aut(X)$ is given by $fgf^{-1}$. It is known that 
the automorphism group of a toric manifold is a linear algebraic group 
with the acting torus as a maximal algebraic torus  
(\cite[Section 3.4]{oda88}) and that 
maximal algebraic tori in a linear algebraic group are all 
conjugate (\cite[Corollary A in p.135]{hump87}). 
Let $T_X$ (resp. $T_{Y}$) be the maximal torus of $\Aut(X)$ 
(resp. $\Aut(Y)$) determined by the torus acting on $X$ (resp. $Y$). 
Since $f_*$ is an isomorphism, $f_*(T_X)$ is a maximal algebraic torus of 
$\Aut(Y)$, so that there is 
an element $h\in \Aut(Y)$ such that $f_*(T_X)=h^{-1}T_{Y}h$.  Then 
the composition 
$hf\colon X\to Y$ induces an isomorphism $(hf)_*\colon \Aut(X)\to 
\Aut(Y)$ mapping $T_X$ to $T_{Y}$. This implies that the isomorphism 
$hf$ is weakly equivariant.  
\end{proof}

Because of the equivalence between (V1) and (V3) above, 
the classification problem of toric manifolds as varieties 
reduces to the combinatorial problem of classifying fans up to isomorphism. 
Based on this fact, the classification of toric manifolds of dimension $n$ 
as varieties has been completed in several cases.  For instance,
\begin{enumerate}
\item[(1)] $n=2$ (\cite[Theorem 1.28]{oda88}), or 
$n=3$ and the 2nd Betti number (or Picard number) is five 
or less (\cite[Theorem 1.34]{oda88}). 
\item[(2)] Smooth toric Fano varieties (i.e. toric manifolds with an ample 
anticanonical divisor) of dimension $n\le 4$ (\cite[Proposition 2.21]
{oda88} for $n=2$, \cite[p.90]{oda88} for $n=3$, 
\cite{baty99} and \cite{sato00} for $n=4$). 
\item[(3)] 2-stage generalized Bott towers (\cite{klei88}). 
\end{enumerate}
See \cite{ewal96}, \cite{sato06} and their 
references for further classification results. 

\medskip

If two toric manifolds $X$ and $Y$ are (weakly equivariantly) isomorphic as 
varieties, then their equivariant cohomology algebras are \emph{weakly 
isomorphic}, i.e., there is a graded ring isomorphism $\Phi\colon H^*_{T}(Y)
\to H^*_T(X)$ together with an automorphism $\rho$ of $T$ 
such that $\Phi(u\omega)=\rho^*(u)\Phi(\omega)$ for any $u\in H^*(BT)$ 
and $\omega\in H^*_T(Y)$, where $\rho^*$ is the automorphism of $H^*(BT)$ 
induced by $\rho$. It turns out that the converse holds (\cite{masu07}), so 
we have the following another equivalent statement to (V1) above:
\begin{enumerate}
\item[(V4)] 
$H^*_T(X)$ and $H^*_{T}(Y)$ are weakly isomorphic as algebras. 
\end{enumerate}

\section{Topological classification of toric manifolds} \label{4}

We shall consider the topological classification of toric manifolds. 
If two toric manifolds are isomorphic as varieties, then they are 
homeomorphic, but the converse does not hold in general.  
Here is a well-known simple example. 

\begin{exam}\label{hirze}
For an integer $a$, we denote by $\gamma^a$ the $a$ fold tensor product 
of the canonical line bundle $\gamma$ over $\C P^1$.  
Let $\underline{\C}$ be the trivial line bundle over $\C P^1$. 
Then $P(\underline{\C}\oplus\gamma^a)$ is a Hirzebruch surface 
(i.e., 2-stage Bott tower). It is well known that 
$P(\underline{\C}\oplus\gamma^a)$ and 
$P(\underline{\C}\oplus\gamma^b)$ are isomorphic 
as varieties if and only if $|a|=|b|$ (see \cite[Theorem 1.28 (3)]{oda88}), 
while they are homeomorphic if and only if $a\equiv b\pmod 2$.  


Here is a proof of the homeomorphism classification above. 
The \lq\lq only if" part follows from the fact that 
$P(\underline{\C}\oplus\gamma^a)$ is spin if and only if $a$ is even. 
One can also check that if
$H^*(P(\underline{\C}\oplus\gamma^a))$ is isomorphic to 
$H^*(P(\underline{\C}\oplus\gamma^{b}))$ as graded rings, then $a\equiv b\pmod 2$.
The proof of the \lq\lq if" part is as follows. 
Note that $P(E)$ is homeomorphic to $P(E\otimes\eta)$ for any complex 
vector bundle $E$ and any complex line
bundle $\eta$. Suppose $a\equiv b \pmod 2$. Then $b-a=2\ell$ for
some $\ell\in \Z$ and we have homeomorphisms
\[
P(\underline{\C}\oplus\gamma^a)\cong
P((\underline{\C}\oplus\gamma^a)\otimes\gamma^{\ell})=
P(\gamma^\ell\oplus\gamma^{a+\ell}).
\]
Since both $\gamma^\ell\oplus\gamma^{a+\ell}$ and
$\underline{\C}\oplus\gamma^{b}$ are over $\C P^1$ and have the same 
first Chern class, they are isomorphic. Hence the last space
above is $P(\underline{\C}\oplus\gamma^{b})$, proving 
the \lq\lq if" part. 

In fact, $P(\underline{\C}\oplus\gamma^a)$ is homeomorphic to 
$\C P^1\times \C P^1$ (resp. $\C P^2\#\overline{\C P^2})$) when $a$ is even 
(resp. odd), where $\overline{\C P^2}$ denotes $\C P^2$ with reversed 
orientation. 
\end{exam}

As remarked at the end of Section~\ref{3}, equivariant cohomology 
determines the isomorphism type of toric manifolds as varieties. 
This leads us to ask how much information \emph{ordinary} cohomology has 
for toric manifolds, and the example above shows that ordinary cohomology ring 
distinguishes the homeomorphism types of Hirzebruch surfaces. We ask 

\begin{prob} \label{cohom}
Are toric manifolds $X$ and $Y$ homeomorphic 
if $H^*(X)\cong H^*(Y)$ as graded rings (or if $X$ and $Y$ are homotopy equivalent)? 
\end{prob}

There are 
infinitely many closed smooth manifolds which are homotopy equivalent to 
$\C P^n$ but not homeomorphic to each other for $n\ge 3$ (\cite{hsia66}, 
\cite{mo-ya66}). 
More generally, surgery theory would imply a similar result for 
many toric manifolds different from $\C P^n$. 
So, Problem~\ref{cohom} might be bold but we have a feeling that most of 
manifolds do not have large symmetry and 
we do not know any counterexample to Problem~\ref{cohom}. 

We shall give some evidence supporting Problem~\ref{cohom}. 

\begin{prop} 
Problem~\ref{cohom} has an affirmative solution 
for toric manifolds of complex dimension one and two.
\end{prop}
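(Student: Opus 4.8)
The plan is to verify Problem~\ref{cohom} separately in complex dimension one and two, proceeding by classifying the relevant toric manifolds up to homeomorphism and then checking that the cohomology ring detects each homeomorphism class.

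\medskip

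The dimension-one case is immediate. A toric manifold of complex dimension one is a compact smooth toric variety whose fan is a complete non-singular fan in $\R^1$; the only such fan is the one with two maximal (one-dimensional) cones spanned by $+1$ and $-1$, so $X$ must be $\C P^1$. Thus there is nothing to classify: any two toric manifolds of complex dimension one are homeomorphic to $\C P^1$, and so $H^*(X)\cong H^*(Y)$ trivially implies $X\cong Y$.

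\medskip

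For complex dimension two the starting point is the variety-level classification recalled in Section~\ref{3}, item~(1): by \cite[Theorem 1.28]{oda88} the complete non-singular fans in $\R^2$ are classified, and the corresponding toric surfaces are, up to variety isomorphism, exactly $\C P^2$, $\C P^2$ blown up at some points (iterated blow-ups), and the Hirzebruch surfaces $P(\uC\oplus\gamma^a)$. Since each toric surface is a smooth complete simply connected $4$-manifold, the key tool is the classification of such $4$-manifolds by the homeomorphism type of their intersection form. I would first compute, from the presentation (\ref{tcoho}), the cohomology ring of each toric surface and read off the intersection form on $H^2$, together with the second Stiefel--Whitney class (equivalently, whether the form is even or odd, i.e. whether the manifold is spin). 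The strategy is then: (a) show that the graded ring $H^*(X)$ determines the intersection form and the parity of $w_2$; and (b) invoke Freedman's classification of simply connected closed topological $4$-manifolds, which says that the homeomorphism type is determined by the intersection form together with the Kirby--Siebenmann invariant, the latter being forced to vanish since these manifolds are smooth. The Hirzebruch-surface subcase is precisely Example~\ref{hirze}, where the congruence $a\equiv b\pmod 2$ governing homeomorphism is exactly the even/odd distinction of the intersection form, already shown there to be detected by $H^*$.

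\medskip

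The main obstacle I expect is step~(a): one must argue that two toric surfaces with isomorphic graded cohomology rings in fact have isometric intersection forms \emph{and} the same parity. The intersection form is the cup-product pairing on $H^2$, so an isomorphism of graded rings automatically respects it up to the induced isometry; the subtler point is matching the characteristic element detecting $w_2$, which is encoded ring-theoretically via the total Stiefel--Whitney class but is not a priori a ring-theoretic invariant. For toric manifolds, however, $w_2$ reduces mod $2$ from an integral class expressible in the $\mu_i$, so I would use the explicit description of the Stiefel--Whitney classes in terms of the generators $\mu_i$ of (\ref{tcoho}) to show parity is recovered from the ring structure. Once (a) is in place, (b) is a direct appeal to Freedman's theorem, and the proposition follows.
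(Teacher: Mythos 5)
Your approach is essentially the paper's: both rest on Freedman's classification of simply connected closed $4$-manifolds by their intersection forms, applied to toric surfaces, which are smooth and simply connected. However, you have made the argument carry more weight than it needs to, and the step you single out as the ``main obstacle'' is not actually an obstacle. First, the detour through the variety-level classification of toric surfaces (blow-ups of $\C P^2$ and Hirzebruch surfaces) is unnecessary: all that is used is that a toric manifold of complex dimension two is a smooth, simply connected, closed $4$-manifold, which follows from general facts about toric manifolds, with no case analysis. Second, the worry about recovering $w_2$ ring-theoretically dissolves. For a simply connected closed $4$-manifold, Wu's formula says $w_2$ is the mod $2$ reduction of a characteristic element of the intersection form, so the manifold is spin if and only if the form is even --- and evenness/oddness is an intrinsic property of the form, i.e.\ of the cup product on $H^2$, hence preserved by any graded ring isomorphism. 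No separate identification of Stiefel--Whitney classes in terms of the $\mu_i$ is needed. Finally, the Kirby--Siebenmann invariant vanishes because both manifolds are smooth, so Freedman's theorem gives the homeomorphism outright once the forms are isometric. With these simplifications your argument collapses to the paper's three-line proof: cohomology ring determines the intersection form, and the intersection form determines the homeomorphism type of a smooth simply connected closed $4$-manifold.
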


\begin{proof}
A toric manifold of complex dimension one is $\C P^1$, so the proposition 
is trivial in dimension one. 
Simply connected real 4-dimensional closed smooth
manifolds are classified up to homeomorphism by isomorphism classes of 
the bilinear forms 
defined by the intersection paring of real 2-cyclyes (\cite{free82}), 
so the homeomorphism types of those manifolds are distinguished by 
their cohomology rings. 
This together with the fact that any toric manifold is smooth and 
simply connected 
(\cite[Section 3.2]{fult93}) implies the proposition in dimension two. 
\end{proof}

\begin{rema}
Any toric manifold of complex dimension two is obtained by blowing up 
$\C P^2$ or a Hirzebruch surface finitely many times 
(\cite[Theorem 1.28]{oda88}).  
As remarked before, any Hirzebruch surface 
is homeomorphic to either 
$\C P^1\times \C P^1$ or $\C P^2\#\overline{\C P^2}$.  
Although $\C P^1\times \C P^1$ and $\C P^2\#\overline{\C P^2}$ are not 
homeomorphic, they become homeomorphic 
after blowing up (in 
other words, after taking the connected sum with $\overline{\C P^2}$). 
Thus, a toric manifold of complex dimension two is homeomorphic 
(even diffeomorphic) to $\C P^2$, $\C P^1\times \C P^1$ or the connected 
sum of $\C P^2$ with a finite number of copies of $\overline{\C P^2}$ 
(\cite{fi-ya94}). 
\end{rema}

Besides the proposition above, 
there are some partial affirmative solutions to 
Problem~\ref{cohom}.  For instance, 
\begin{enumerate}
\item[(1)] $X=(\C P^1)^n$ and $Y$ is an arbitrary toric 
manifold (\cite{ma-pa07}). 
\item[(2)] $X$ is a product of complex projective spaces and $Y$ is 
an arbitrary generalized Bott tower (\cite{ch-ma-su07}). 
\item[(3)] $X$ and $Y$ are both 2-stage generalized Bott towers 
(\cite{ch-ma-su07}). 
\end{enumerate}
The reader can find more partial affirmative solutions to 
Problem~\ref{cohom} in \cite{ch-ma-su07}. 

\medskip

The simplicial complex $\Sigma_X$ associated with a toric manifold $X$ is 
determined by the equivariant cohomology of $X$ as explained in 
Section~\ref{2}. As for ordinary cohomology, 
the number $f_i$ of $i$-simplices in $\Sigma_X$ is determined by $H^*(X)$. 
In fact, since 
$H^*_T(X)$ is the face ring of the simplicial complex $\Sigma_X$, we have 
\[
\sum_{i=0}^\infty \rank H^{2i}_T(X)t^i=
1+\sum_{i=0}^{n-1}\frac{f_it^{i+1}}{(1-t)^{i+1}}
\]
(see \cite[Theorem 1.4 in p.54]{stan96}) 
while since $H^*_T(X)$ is isomorphic to $H^*(BT)\otimes H^*(X)$ as graded 
modules, the left hand side above is equal to 
\[
\frac{1}{(1-t)^n}\sum_{k=0}^n\rank H^{2k}(X)t^k. 
\]
Equating the above two and replacing $t$ by $(s+1)^{-1}$, we see that 
$f_i$ agrees with the coefficient of $s^{n-i-1}$ in 
$\sum_{k=0}^n\rank H^{2k}(X)(s+1)^{n-k}$. 

Although $H^*(X)$ contains some information on $\Sigma_X$ as observed above, 
it is not true in general that $\Sigma_X$ is determined by $H^*(X)$ 
as is seen in the following example. 

\begin{exam} \label{stack}
We use the fact that a maximal simplex in $\Sigma_X$ corresponds to a fixed 
point in $X$ and blowing up $X$ at a fixed point equivariantly corresponds to 
applying a stellar subdivision (\cite[p.70]{ewal96}) 
to the simplex corresponding to the fixed point (\cite[Section 2.6]{fult93}). 

We start with $\C P^2\times \C P^1$ with a standard action of $(\C^*)^3$. 
The simplicial complex associated with it 
is the suspension of the boundary complex of 
a triangle. Let $Y$ be a toric manifold obtained by blowing up 
$\C P^2\times\C P^1$ at a fixed point equivariantly.  Although there are six 
fixed points in $\C P^2\times \C P^1$, the 
simplicial complex associated with $Y$ does not depend on the fixed point 
chosen for blowing up. 
Then we blow up $Y$ at a fixed point equivariantly. 
In this case the simplicial complex associated with the resulting toric 
manifold $X$ does depend on the fixed point chosen for blowing up.  In fact, 
we obtain three 
different underlying simplicial complexes shown as the first three in the 
second line in p.192 of \cite{oda88}. However, $X$ is 
homeomorphic to the connected sum of $\C P^2\times \C P^1$ with two copies 
of $\C P^3$ (with reversed orientation) regardless of the chosen fixed point.  
Therefore this gives a desired example. 
\end{exam}

On the contrary, $\Sigma_X$ is sometimes determined by $H^*(X)$. 
For instance, this is the case when $\Sigma_X$ is the boundary complex of 
a crosspolytope (\cite{ma-pa07}).  Motivated by this, 
we say that 
the simplicial complex $\Sigma_X$ associated with a toric manifold $X$ 
is \emph{rigid} if $\Sigma_X\cong \Sigma_Y$ whenever $H^*(X)\cong H^*(Y)$ 
as graded rings.
The boundary complex $\partial \Delta^n$ of a simplex $\Delta^n$ of dimension 
$n$ is rigid because a toric manifold with $\partial \Delta^n$ as 
the associated simplicial complex is only $\C P^n$.  Moreover, 
the result mentioned above asserts that 
a join $\partial \Delta^0*\dots*\partial\Delta^0$, which is isomorphic to 
the boundary complex of a crosspolytope, is rigid.  We ask 

\begin{prob} \label{cosim}
Which simplicial complex is rigid or not rigid?  
In particular, is a join $\partial\Delta^{n_1}*\dots*\partial\Delta^{n_k}$ 
rigid for any value of $n_i$'s and any $k$?
\end{prob}

As an intermediate step to Problem~\ref{cohom}, we may ask 

\begin{prob} \label{rcohom}
Are toric manifolds $X$ and $Y$ homeomorphic if $H^*(X)\cong H^*(Y)$ as graded rings 
and $\Sigma_X$ is rigid, or more generally, if $H^*(X)\cong H^*(Y)$ as graded rings 
and $\Sigma_X\cong \Sigma_Y$? 
\end{prob}

Although Problems~\ref{cohom} and~\ref{rcohom} are stated 
in the topological category, affirmative results known so far to 
those problems actually hold in the smooth category. Quite generally, 
we may ask 

\begin{prob} \label{homeo}
Are two toric manifolds diffeomorphic if they are homeomorphic?
\end{prob}

\section{Pontrjagin class of a toric manifold} \label{5}

A homeomorphism between closed manifolds preserves their rational 
Pontrjagin classes as is well known.  
Since the cohomology group of a toric manifold has 
no torsion, any homeomorphism between toric manifolds preserves their 
integral Pontrjagin classes. 
Therefore, as a step toward Problem~\ref{cohom} we may ask 

\begin{prob} \label{pontj}
If two toric manifolds have isomorphic cohomology rings, then 
is there an isomorphism between their cohomology rings which 
preserves their Pontrjagin classes? 
\end{prob}

We have an explicit description (\ref{tcoho}) of $H^*(X)$ for a toric 
manifold $X$, and it is known 
that the Chern class of $X$ is given by $\prod_{i=1}^m(1+\mu_i)$ 
(see \cite[Theorem 3.12 in p.131]{oda88}), 
so the Pontrjagin class $p(X)$ of $X$ is given by 
\begin{equation} \label{tpont}
p(X)=\prod_{i=1}^m(1+\mu_i^2).
\end{equation}
Therefore Problem~\ref{pontj} is purely algebraic. 

The affirmative solution to Problem~\ref{cohom} implies the affirmative 
solution to Problem~\ref{pontj} as remarked at the beginning of this section, 
but the results of \cite{wall66} and \cite{jupp73} show that the 
converse implication also holds in complex dimension three. 

Problem~\ref{cohom} asks whether there is a homeomorphism between 
toric manifolds if there is an isomorphism between their cohomology rings. 
More strongly we may ask 

\begin{prob} \label{surje}
Is any isomorphism between cohomology rings of toric manifolds induced 
by a homeomorphism between the manifolds?
In particular, does any isomorphism between cohomology rings of toric manifolds 
preserve the Pontrjagin classes of the manifolds? 
\end{prob}

Some partial affirmative solution to the latter part of Problem~\ref{surje} 
could be found in \cite{ch-ma-su07}. 

Problem~\ref{pontj} or the latter part of Problem~\ref{surje} reminds us 
of a conjecture by Petrie~\cite{petr72}, which 
says that if $M$ is a closed smooth manifold homotopy equivalent to $\C P^n$ 
and 
$M$ admits a non-trivial smooth action of $S^1$, then any homotopy equivalence 
between $M$ and $\C P^n$ preserves their Pontrjagin classes, i.e., 
$p(M)=(1+x^2)^{n+1}$ for a generator $x\in H^2(M)$. No counterexample is 
known and there are some partial affirmative solutions to the conjecture.  
Among them, it is proved 
in \cite{petr73} that $p(M)$ is of the above form if $M$ supports an effective 
smooth action of $(S^1)^n$.  See \cite{dess04} for related results.

\section{Quasitoric manifolds} \label{6}

The theory of toric manifolds can be developed in the topological category 
to some extent.  The pioneering work in this direction was done by 
Davis-Januszkiewicz in \cite{da-ja91}.  They introduced the notion of 
what is now called a \emph{quasitoric manifold} as a topological 
counterpart to a toric manifold in algebraic geometry, and showed that 
an analogous theory can be developed for quasitoric manifolds in 
the topological category. 
We refer the reader to a book \cite{bu-pa02} 
by Buchstaber-Panov for further development. 

A quasitoric manifold is a closed smooth manifold $M$ of real dimension $2n$ 
with a smooth action of $(S^1)^n$ such that 
\begin{enumerate}
\item[(1)] the action is \emph{locally standard} 
(that is, the action is locally same as a faithful real $2n$-dimensional 
representation of $(S^1)^n$ in the smooth category), and 
\item[(2)] the orbit space 
$M/(S^1)^n$ is a simple convex polytope. 
\end{enumerate}
The restricted action of $(S^1)^n$ on a toric manifold $X$ is locally standard 
and the orbit space $X/(S^1)^n$ is a manifold with corners whose faces 
(even $X/(S^1)^n$ itself) are all contractible and any multiple intersection 
of faces is connected whenever it is non-empty.  When $X$ is projective, 
there is a moment map whose image identifies $X/(S^1)^n$ 
with a simple convex polytope. Therefore, a projective toric manifold 
provides an example of a quasitoric manifold. Even if a toric manifold 
is not projective, it often provides an example of a quasitoric manifold. 
For example, there are non-projective toric manifolds of complex dimension 
three (see \cite[Section 2.3]{oda88}), but any toric manifold of complex 
dimension three with the restricted action of $(S^1)^3$ 
is a quasitoric manifold, which follows from a famous theorem 
of Steinitz (see \cite[Theorem 4.1]{zieg95}). 
However, a toric manifold may fail to be a quasitoric manifold in higher 
dimensions. 


On the other hand, it is easy to find a quasitoric but not 
toric manifold.  For instance, $\C P^2\#\C P^2$ is a quasitoric 
manifold with an appropriate action of $(S^1)^2$ but not a toric 
manifold because it does not allow a complex structure (even an almost 
complex structure).   See \cite[Section 5]{masu99} for examples of 
quasitoric manifolds which are not toric but allow an almost complex 
structure invariant under the torus action. 

Let $M$ be a quasitoric manifold of dimension $2n$ 
with a simple convex polytope $P$ of dimension $n$ as the orbit space and 
let 
$$q\colon M\to P=M/(S^1)^n$$ 
be the quotient map. 
Let $M_i$ $(i=1,\dots,m)$ be a closed smooth codimension two submanifold of 
$M$ fixed pointwise under some $S^1$-subgroup of $(S^1)^n$. We call 
$M_i$'s \emph{characteristic submanifolds} of $M$. 
When $M$ is a toric manifold, $M_i$'s are invariant divisors. 
Note that $q(M_i)$ is a \emph{facet} (i.e. codimension one face) of $P$ and 
the map $q$ gives a one-to-one correspondence between the characteristic 
submanifolds of $M$ and the facets of $P$. 

The group $\Hom(S^1,(S^1)^n)$ of homomorphisms from $S^1$ to 
$(S^1)^n$ can 
naturally be identified with $\Z^n$, and we denote by $\lambda_v$ the 
element in $\Hom(S^1,(S^1)^n)$ corresponding to $v\in \Z^n$. 
Let $v_i$ be a primitive element in $\Z^n$ such that $\lambda_{v_i}(S^1)$ 
fixes $M_i$ pointwise.  Note that there are two choices of $v_i$ and 
the other one is $-v_i$.  We need an orientation data (called an 
\emph{omniorientation} in \cite{bu-pa02}) on $M$ and $M_i$ 
to make the choice of $v_i$ unique. 
When $M$ is a toric manifold, both $M$ and $M_i$ are complex manifolds and 
have canonical orientations, so that $v_i$'s are uniquely determined and 
$v_i$ agrees with the edge vector corresponding to $M_i$ in the fan. 

Let $P_i$ $(i=1,\dots,m)$ be the facets of $P$ 
such that $P_i=q(M_i)$.  The vectors $v_i$'s are assembled to 
define what is called the \emph{characteristic function} of $M$: 
\[
\Lambda_M \colon \{P_1,\dots,P_m\}\to \Z^n. 
\]
When $M$ is a toric manifold with $P$ as the orbit space by the restricted 
action of $(S^1)^n$, the simplicial complex $\Sigma_M$ associated with $M$ 
agrees with 
the boundary complex of the dual of $P$ and $v_i$'s are the edge 
vectors of the fan of $M$; so the characteristic function $\Lambda_M$ 
together with (the combinatorial type of) $P$ has an equivalent data to the 
fan of $M$. 

The map $\Lambda_M$ above has the property that 
whenever $n$ facets of $P$ meet at a vertex of $P$, 
their images by $\Lambda_M$ form a basis of $\Z^n$. A map 
from the set $\{P_1,\dots,P_m\}$ to $\Z^n$ 
possessing this property is called a \emph{characteristic function} on $P$. 
It is known that any characteristic function on $P$ can be realized as 
the characteristic function of some quasitoric manifold over $P$ 
(see \cite[Section 3]{bu-pa-ra06}\footnote{Such a quasitoric manifold is 
constructed in \cite[Section 1.5]{da-ja91} but it 
is not obvious how to give a smooth structure on the manifold.}). 

Let $M$ and $M'$ be quasitoric manifolds over $P$.  Then 
$\Lambda_M=\Lambda_{M'}$ if and only if there is an equivariant 
homeomorphism between $M$ and $M'$ covering the identity on $P$ 
(\cite{da-ja91}).  However an equivariant homeomorphism between them 
does not necessarily cover the identity on $P$. In general, it 
covers a self-homeomorphism of $P$ preserving the face structure of $P$. 
The group $\Aut(P)$ of self-homeomorphisms of $P$ preserving the face 
structure acts on the set $\cf(P)$ of characteristic functions on $P$ 
through 
the natural action on the set $\{P_1,\dots,P_m\}$.  One sees that 
$M$ and $M'$ are equivariantly homeomorphic if and only if 
$\Lambda_M$ and $\Lambda_{M'}$ are in the same orbit of $\Aut(P)$ in $\cf(P)$. 

Dobrinskaya \cite{dobr01} discusses the classification of characteristic 
functions over a given polytope $P$. In particular, she gives a criterion 
of when 
a quasitoric manifold over a product of simplices is a toric manifold 
(that is, a generalized Bott tower in this case), 
see also \cite{ch-ma-su07-2} and \cite{ma-pa07}. 

It is proved in \cite{da-ja91} that 
(\ref{tcoho}) and (\ref{tpont}) hold even for any quasitoric manifold, so 
one may ask the problems in Sections~\ref{4} and~\ref{5} 
for quasitoric manifolds.  
A partial affirmative solution to Problem 1 for quasitoric manifolds is given 
in \cite{ch-ma-su07-2} and \cite{ma-pa07}.

\section{Torus manifolds} \label{7}

The family of quasitoric manifolds 
may not contain the family of toric manifolds entirely but 
an analogous theory to the toric theory can be developed for quasitoric 
manifolds.  This implies that the theory can further be extended to a 
certain family of manifolds containing both toric manifolds and quasitoric 
manifolds. 

A \emph{torus manifold} $M$ introduced in \cite{ha-ma03} is a closed smooth 
orientable manifold of 
dimension $2n$ with a smooth effective action of $(S^1)^n$ having 
a fixed point. Obviously both toric manifolds and quasitoric manifolds are 
torus manifolds.  A simple example of a torus manifold which is neither 
toric nor quasitoric is as follows. 

\begin{exam}\label{S2n}
Let $S^{2n}$ be the $2n$-dimensional sphere  
identified with the following subset of $\C^{n}\times\R$:
$$
  \bigl\{ (z_1,\dots,z_n,y)\in\C^n\times\R\mid |z_1|^2+\dots+|z_n|^2+y^2=1 
  \bigr\},
$$
and define an action of $(S^1)^n$ on $S^{2n}$ by
$$
  (t_1,\dots,t_n)\cdot(z_1,\dots,z_n,y)=(t_1z_1,\dots,t_nz_n,y).
$$
A map
\[
(z_1,\dots,z_n,y)\to (|z_1|,\dots,|z_n|,y)
\]
induces a homeomorphism from the orbit space 
$S^{2n}/(S^1)^n$ onto the following subset of the $n$-sphere: 
\[
\{ (x_1,\dots,x_n,y)\in \R^{n+1}\mid x_1^2+\dots+x_n^2+y^2=1,\ x_1\ge 0,
\dots,x_n\ge 0\}.
\]

The orbit space $S^{2n}/(S^1)^n$ is a manifold with corners and 
every face of the orbit space (even the orbit space itself) is 
contractible.  The facets are images of real codimension 
two submanifolds $\{z_i=0\}$ of $S^{2n}$ $(i=1,\dots,n)$ 
under the quotient map above and the intersection of the $n$ facets consists 
of two points $(0,\dots,0,\pm 1)$ when $n\ge 2$. 
Therefore $S^{2n}$ with the above action is a torus manifold which is neither 
toric nor quasitoric when $n\ge 2$. 
\end{exam}

A simplicial poset $\P$ 
is a finite poset with a smallest element $\hat 0$ such that every 
interval $[\hat 0,y]$ for $y\in \P$, that is a subposet of $\P$ consisting of 
all elements between $\hat 0$ and $y$,  
is isomorphic to the set of all subsets of a finite set, ordered by 
inclusion.  
The set of all faces of a (finite) simplicial complex 
with empty set added forms a simplicial poset ordered by inclusion, 
where the empty set is the smallest element.  
Such a simplicial poset is called the \emph{face poset} of a 
simplicial complex, and two simplicial complexes are isomorphic if and 
only if their face posets are isomorphic.  Therefore, a simplicial 
poset can be thought of as a generalization of a simplicial complex. 

Although a simplicial poset $\P$ is not necessarily the face poset of 
a simplicial complex, it is always the face poset of a finite CW complex 
$\Gamma(\P)$. In fact, to each $y\in \P\backslash\{\hat 0\}$, 
we assign a (geometrical) simplex whose face poset is $[\hat 0,y]$ and 
glue those geometrical simplices according to the order relation in $\P$.
Then we get the CW complex $\Gamma(\P)$ such that all the cells 
are simplices and all the attaching maps are inclusions.  
A finite CW complex like $\Gamma(\P)$ is called 
a \emph{simplicial cell complex}.  We may say that 
the notion of simplicial poset is equivalent to that of simplicial cell 
complex. 

The face poset of the orbit space $S^{2n}/(S^1)^n$ in Example~\ref{S2n}
(with reversed order by inclusion) 
is not the face poset of a simplicial complex when $n\ge 2$.  
But it is the face poset of a simplicial cell complex 
formed from two $(n-1)$-simplices by gluing their boundaries via 
the identity map.  It can be thought of as the \emph{dual} of the boundary of 
the orbit space $S^{2n}/(S^1)^n$. 

The orbit space of a toric or quasitoric manifold is topologically trivial. 
This is also the case for the torus manifold in Example~\ref{S2n} but 
not always the case for an arbitrary torus manifold as the following example 
shows. 

\begin{exam} \label{MN}
%
Take a torus manifold $M$ of dimension $2n$ and a closed smooth manifold 
manifold $N$ of dimension $n$, and 
consider the free action of $(S^1)^n$ on the product 
$N\times (S^1)^n$ given by multiplication on the second factor. 
We choose a free orbit for each of $M$ and $N\times (S^1)^n$, remove 
their invariant open tubular neighborhoods and glue the resulting manifolds 
along their boundaries to get a new torus manifold $M'$.  
The orbit space $M'/(S^1)^n$ is the connected sum of $M/(S^1)^n$ 
and $N$ at interior points. Since $N$ can be arbitrary, 
the orbit space of a torus manifold 
is not necessarily topologically trivial unlike toric or quasitoric manifolds. 
\end{exam}

For a torus manifold $M$, characteristic submanifolds $M_i$'s 
can be defined similarly to the quasitoric case and they play the role 
of the invariant divisors $X_i$ for a toric manifold $X$. 
Similarly to the toric case, we get a simplicial complex $\Sigma_M$ 
and vectors $v_i$'s from the characteristic submanifolds $M_i$'s. 
Using these data, one can associate with $M$ 
a combinatorial object $\Delta_M$ called the \emph{multi-fan} of $M$ 
in a similar fashion to the toric case (\cite{ha-ma03}, \cite{masu99}). 
Precisely speaking, we assign orientations on $M$ and $M_i$'s 
(i.e.~an omniorientation) which make the choice of $v_i$'s unique 
as remarked in Section~\ref{6} and 
moreover we attach an integer to each cone of maximum dimension $n$. 
Such a cone corresponds to $n$ characteristic submanifolds
in $M$ and the integer attached to the cone counts the number of points 
(with sign determined by the omniorientation) 
in the intersection of the $n$ characteristic submanifolds. 
When $M$ is a toric manifold, the attached integers are all one 
(so that we may neglect them) and the multi-fan $\Delta_M$ is an ordinary fan, 
but unless $M$ is a toric manifold, the attached integers are not 
necessarily one 
and cones in $\Delta_M$ may overlap. 
Although $\Delta_M$ contains 
a lot of geometrical information on $M$, it does not determine $M$ in general. 
For instance, the torus manifolds $M$ and $M'$ in Example~\ref{MN}
are not equivariantly homeomorphic in general, but their mutli-fans are 
same because a mulit-fan is defined using only characteristic submanifolds and 
the characteristic submanifolds of $M$ and $M'$ are same. 

If the action on a torus manifold of dimension $2n$ is locally standard, 
then its orbit space is a compact \emph{nice} manifold of dimension $n$ 
with corners, where \lq\lq 
nice" means that there are exactly $n$ codimension-one faces meeting at 
each vertex.  A teardrop (of dimension two) is a manifold with corners but 
not nice. In order to define a good family of torus manifolds, 
we shall introduce some terminology for a compact nice manifold with corners. 

Let $Q$ be a compact nice manifold with corners.  
Faces of $Q$ can naturally be defined and we understand that $Q$ itself 
is a face. We say that $Q$ is a \emph{homology cell} if 
all faces of $Q$ are acyclic, and a \emph{homology polytope} if it is 
a homology cell and any multiple intersection of faces is connected 
whenever it is non-empty. We also say 
that $Q$ is a \emph{homotopy cell} (resp. \emph{homotopy polytope}) if it is 
a homology cell (resp. homology polytope) and all faces are simply connected 
so that all faces are contractible. 
A simple convex polytope is a homotopy polytope and the orbit space 
$S^{2n}/(S^1)^n$ 
in Example~\ref{S2n} is a homotopy cell but not a homotopy polytope when 
$n\ge 2$. 
The face poset of $Q$ 
(with reversed order by inclusion) is a simplicial poset and is 
the face poset of a simplicial complex if any multiple 
intersection of faces of $Q$ is connected whenever it is non-empty.  
The following question asks whether homotopy cells or homotopy polytopes 
can be determined combinatorially. 

\begin{prob} \label{hopo}
Are homotopy cells (or homotopy polytopes) homeomorphic 
as manifold with corners if their face posets are isomorphic?
\end{prob} 

It is shown in \cite{ma-pa06} that if $H^{odd}(M)=0$ for a torus manifold $M$, 
then the torus action on $M$ is locally standard, and 
moreover shown that the orbit space of 
a locally standard torus manifold $M$ is a homology cell (resp. homology 
polytope) if and only if $H^{odd}(M)=0$ (resp. $H^*(M)$ is generated by 
$H^2(M)$ as a ring). However, the orbit space itself or its faces may 
have a non-trivial fundamental group. 
Since a torus group is connected, simply connectedness of a space with 
torus action is inherited to its orbit space 
(\cite[Corollary 6.3 in p.91]{bred72}). 
Therefore, if a torus manifold $M$ satisfies the following 
two conditions: 
\begin{enumerate}
\item[(1)] $H^{odd}(M)=0$ (resp. $H^*(X)$ is generated by $H^2(M)$ as ring),
\item[(2)] $M$, $M_i$'s and connected components of any multiple intersection 
of $M_i$'s are all simply connected,
\end{enumerate}
then the orbits space $M/(S^1)^n$ is a homotopy cell 
(resp. homotopy polytope). 
We believe that torus manifolds satisfying the two conditions above 
will constitute a good family of manifolds for which the toric theory 
can be developed in the topological category in a nice way. 
Toric or quasitoric manifolds are contained in this family.  

As pointed out in Section~\ref{2}, the data of the fan of a toric manifold $X$ 
is equivalent to the data of the simplicial complex $\Sigma_X$ together with 
the set of vectors $\{v_i\}$. 
%
For a torus manifold $M$, we still have the vectors $\{ v_i\}$ and the face 
poset of $M/(S^1)^n$ takes the place of the simplicial complex $\Sigma_X$.  
The same argument as in 
\cite{da-ja91} will show that the homeomorphism type of a torus manifold 
$M$ satisfying the 
two conditions above will be determined by the face poset of $M/(S^1)^n$ 
together with the set of vectors $\{ v_i\}$ if Problem~\ref{hopo} is 
affirmatively answered.  We may ask the problems in 
Sections~\ref{4} and~\ref{5} even for those torus manifolds. 
The reader can find related study in dimension 4 (\cite{or-ra70}) and 
in dimension 6 (\cite{mcga76}).

\end{document}